\begin{document}
\parskip=6pt

\theoremstyle{plain}

\newtheorem {thm}{Theorem}[section]
\newtheorem {lem}[thm]{Lemma}
\newtheorem {cor}[thm]{Corollary}
\newtheorem {defn}[thm]{Definition}
\newtheorem {prop}[thm]{Proposition}
\numberwithin{equation}{section}
%\newtheorem{proof}{Proof}
%\newtheorem{proof2}{Proof of Theorem 1.2}
%\newtheorem{proof3}{Proof of Theorem 1.3}
%\newtheorem{proof4}{Proof of Theorem 1.4}
%\numberwithin{proof2}{Proof of Theorem 1.2}
\def\cal{\mathcal}
\newcommand{\cF}{\cal F}
\newcommand{\cA}{\cal A}
\newcommand{\cC}{\cal C}
\newcommand{\cO}{{\cal O}}
\newcommand{\cE}{{\cal E}}
\newcommand{\cU}{{\cal U}}
\newcommand{\cM}{{\cal M}}
\newcommand{\cD}{{\cal D}}
\newcommand{\cK}{{\cal K}}
\newcommand{\bC}{\mathbb C}
\newcommand{\bP}{\mathbb P}
\newcommand{\bN}{\mathbb N}
\newcommand{\bA}{\mathbb A}
\newcommand{\bR}{\mathbb R}
\newcommand{\var}{\varepsilon}
\newcommand{\End}{\text{End }}
\newcommand{\loc}{\text{loc}}
\newcommand{\dist}{\text{dist}}
\newcommand{\rk}{\roman{rk }}
\renewcommand\qed{ }
\begin{titlepage}
\title{\bf Extrapolation, a technique to estimate\thanks{Research in part supported by NSF grant DMS--1464150}}
\author{L\'aszl\'o Lempert\\ Department of  Mathematics\\
Purdue University\\West Lafayette, IN
47907-2067, USA}
\thispagestyle{empty}
%\maketitle
\end{titlepage}
\date{}
\maketitle
\abstract
We introduce a technique to estimate a linear operator by embedding it in a family $A_t$ of operators, $t\in(\sigma_0,\infty)$, with suitable curvature properties. One can then estimate the norm of each $A_t$ by bounds that hold in the limit $t\to\sigma_0$, respectively, $t\to\infty$. We illustrate this technique on an extension problem that arises in complex geometry.
\endabstract

\section{Introduction}

This paper grew out of a joint paper with Berndtsson, that used Berndtsson's theorem on the curvature of direct images to prove an Ohsawa--Takegoshi type extension result for holomorphic functions, \cite{BL, B, OT}. Here we distill from \cite{BL} an abstract theorem on estimating Hilbert and Banach space operators by a technique we call extrapolation,  and discuss a dual theorem of similar nature as well as an application.

The related notion of interpolation is a well established method in harmonic analysis to estimate Banach space operators. Loosely stated, given two pairs of Banach spaces and operators
$$
A_0: E_0\to F_0, \qquad A_1:E_1\to F_1
$$
among them, one can construct interpolating families $E_t, F_t$ of Banach spaces and operators
 $A_t: E_t\to F_t$,  $t\in(0, 1)$, whose norms can be bounded in terms of the norms of $A_0$ and $A_1$. There are several inequivalent ways to construct the interpolation spaces. In the complex method, going back to M. Riesz and Thorin, \cite{R, T}, the estimate on $\|A_t\|$ is obtained from Hadamard's Three Circles Theorem, i.e., from properties of subharmonic functions. 

In extrapolation---again loosely stated---we will be given only one operator, through which we will estimate an entire family of operators $A_t:E_t\to F_t$. This again depends on subharmonicity and convexity, derived from curvature properties of the bundles that the $E_t, F_t$ form.

To be concrete, consider two hermitian holomorphic vector bundles $(E, h)$ and $(F,k)$ over the same base $S$, and a holomorphic homomorphism $A:E\to F$. We assume the metrics $h, k$ are of class $C^2$. The fibers of $E$ and $F$ could be Hilbert spaces, but finite rank bundles will already illustrate the main idea. By the norm $\|A\|: S\to [0,\infty)$ of $A$ we mean the function $\|A\|(s)=$ operator norm of $A_s$, where $A_s=A|E_s:(E_s, h)\to (F_s, k)$. In \cite{L} we have defined what it means for $A$ to decrease curvature; this definition will be reproduced in section 2 and Theorem 3.1.

\begin{thm} (i) If $A:E\to F$ decreases curvature, then $\log\|A\|$ is plurisubharmonic. 

(ii) If in addition $S$ is a half plane
 $$S=\{s\in\bC: \text{Re }s>\sigma_0\},$$
$\|A\|$ is bounded, and $\|A\|(s)=\|A\|(\text{Re }s)$ for $s\in S$, then $\|A\|(t)$ is a decreasing function of $t\in(\sigma_0,\infty)$.
\end{thm}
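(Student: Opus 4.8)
The plan is to reduce part (ii) to two elementary facts about convex functions of one real variable, using part (i) as the analytic input. By part (i), $\log\|A\|$ is plurisubharmonic on $S$; since $S$ is a planar domain, this just means $\log\|A\|$ is subharmonic there. Writing $t=\mathrm{Re}\,s$, I would set $u(t)=\log\|A\|(t)$, which is well defined on $(\sigma_0,\infty)$ precisely because, by hypothesis, $\|A\|(s)$ depends only on $\mathrm{Re}\,s$. The whole statement then becomes a claim about this single-variable function $u$.

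The first step is to observe that $u$ is convex. A subharmonic function on the half-plane $S$ that is independent of the imaginary variable $\mathrm{Im}\,s$ is automatically a convex function of its real variable: formally this is because its Laplacian reduces to $u''(t)\ge 0$, and rigorously it follows from the sub-mean-value inequality applied to horizontal averages, or by approximating $u$ by smooth subharmonic functions and passing to the limit. I would record this as a short lemma and apply it to $u=\log\|A\|$.

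The second step uses boundedness. Since $\|A\|$ is bounded, $u$ is bounded above by some constant $\log M$. A convex function on the ray $(\sigma_0,\infty)$ that is bounded above must be non-increasing: if instead $u(t_1)<u(t_2)$ for some $\sigma_0<t_1<t_2$, then the monotonicity of difference quotients of a convex function gives $u(t)\ge u(t_2)+\frac{u(t_2)-u(t_1)}{t_2-t_1}\,(t-t_2)$ for all $t\ge t_2$, whose right-hand side tends to $+\infty$, contradicting the upper bound. Hence $u$ is non-increasing, and therefore so is $\|A\|(t)=e^{u(t)}$, which is exactly the asserted monotonicity.

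The main obstacle I anticipate lies not in this logical skeleton but in the bookkeeping around points where $A_s$ degenerates, i.e.\ where $\|A\|=0$ and $u=-\infty$: one must make sure both the convexity statement and the monotonicity argument remain valid for the extended-real-valued $u$. The clean way around this is to note that a convex function that is bounded above and takes the value $-\infty$ at an interior point of $(\sigma_0,\infty)$ is forced to equal $-\infty$ on an entire subray, in which case $\|A\|$ vanishes identically there and the conclusion is immediate; so one may assume $u$ is finite on $(\sigma_0,\infty)$ and run the estimate above verbatim.
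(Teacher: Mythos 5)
Your proposal is correct and follows essentially the same route as the paper: part (i) is taken as the analytic input (the paper likewise defers it to \cite[Theorem 2.4]{L} rather than reproving it), and part (ii) is deduced exactly as the paper does, by noting that a subharmonic function independent of $\text{Im }s$ is convex in $t=\text{Re }s$ and that a convex function bounded above on a ray must be non-increasing. Your extra care with the $-\infty$ locus and the difference-quotient estimate only fills in details the paper leaves implicit.
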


So, in the setting of (ii), if we can estimate $\|A\|(t)$ for some $t=t_0$, the same estimate will hold for $t\ge t_0$, an instance of extrapolation. For this conclusion we had to know in advance that $\sup_S\|A\|<\infty$. Thus (ii) has the flavor of an a priori estimate: assuming a crude bound on all $\|A\|(t)$, a sharper bound follows.

For the application we have in mind we will need an analogous result but with a homomorphism $A$ that increases curvature. We could formulate a clean theorem for quite special bundles $E, F$. First off, both bundles will be trivial, $E=F=S\times V\to S$. In this case their metrics  $h, k$, can be thought of as a families $h_s, k_s$ of, say, Hilbertian norms on $V$. While from the theorem below $(F,k)$ has disappeared, it is really about the norm of the identity homomorphism $A:(E,k)\to(E, h)$, when $k_s$ is independent of $s$ (and so $(E,k)=(F,k)$ is flat). 

\begin{thm} Suppose that $S=\{s\in\bC: Res>\sigma_0\}$ and $E= S\times V\to S$ is a trivial Hilbert bundle, with $\|\ \ \|$ the norm on $V$. Suppose further that a $C^2$ metric $h$ on $E$ has semipositive curvature, $h_s=h_{Res}$, and with some $c>0$
$$\inf_{s\in S} h_s(v)\ge c\|v\|\qquad \text{ for all } v\in V.$$
Then $h_t(v)$ is an increasing function of $t\in(\sigma_0, \infty)$ for any $v\in V$, in particular, $h_t(v)\le\lim\limits_{\tau\to\infty} h_{\tau}(v)$.
\end{thm}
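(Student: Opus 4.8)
The plan is to recast the conclusion as a monotonicity statement for the metric operator in the Loewner order, and then to read the curvature hypothesis as an operator convexity condition. Fix the reference inner product $\langle\ ,\ \rangle$ on $V$ with $\|v\|^2=\langle v,v\rangle$, and represent the $C^2$ metric by a positive self-adjoint operator $H(t)$ via $h_t(v)^2=\langle H(t)v,v\rangle$; by hypothesis $H$ depends only on $t=\text{Re}\,s$ and is $C^2$ in $t$. Since the square root is monotone it suffices to show, for each fixed $v\neq 0$, that $t\mapsto\langle H(t)v,v\rangle$ is increasing, i.e.\ that $H(t)$ is increasing in the Loewner order. I would obtain this by proving instead that $K(t):=H(t)^{-1}$ is \emph{decreasing} in the Loewner order and then invoking the anti-monotonicity of inversion ($0<A\le B\Rightarrow B^{-1}\le A^{-1}$) to flip the conclusion back to $H$.

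The first and main step is to express semipositivity of the curvature of $h$ in terms of $K=H^{-1}$. Since $E=S\times V$ is trivial and $h_s=h_{\text{Re}\,s}$, the Chern connection form is $\theta=H^{-1}\partial H$ and, because $\partial_s=\partial_{\bar s}=\tfrac12\partial_t$ on functions of $t$ alone, the single surviving curvature component is a fixed positive multiple of $H^{-1}\bigl(H''-H'H^{-1}H'\bigr)$. Testing semipositivity against an arbitrary $v$ in the $h$-metric thus amounts exactly to the operator inequality $H''\le H'H^{-1}H'$. Differentiating the inverse gives $(H^{-1})''=H^{-1}\bigl(2H'H^{-1}H'-H''\bigr)H^{-1}$, and the curvature inequality makes the bracket at least $H'H^{-1}H'\ge 0$; hence $K''\ge 0$, so $t\mapsto\langle K(t)v,v\rangle$ is convex for every $v$. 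I expect this curvature bookkeeping to be the crux: one must get the signs right and recognize that $H^{-1}$, rather than $H$ or $\log H$, is the naturally convex object. Here I must resist the tempting shortcut through superharmonicity of $\log h_t(v)$, since for vector bundles the Cauchy--Schwarz term in $\partial\bar\partial\log\|v\|^2$ has the wrong sign and the log of the norm of the constant section $v$ is in general not concave.

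The second step brings in the a priori lower bound. From $h_s(v)\ge c\|v\|$ I get $H(t)\ge c^2I$, hence $K(t)=H(t)^{-1}\le c^{-2}I$, so each $t\mapsto\langle K(t)v,v\rangle$ is bounded above by $c^{-2}\|v\|^2$, uniformly in $t$. A convex function on the half-line $(\sigma_0,\infty)$ that is bounded above must be nonincreasing---were its slope ever positive, convexity would push it to $+\infty$---which is precisely the elementary mechanism already exploited in Theorem 1.1(ii). Applying it to $\langle K(t)v,v\rangle$ for every $v$ shows $K$ is decreasing in the Loewner order; inverting, $H$ is increasing, which is the assertion, and then $h_t(v)=\langle H(t)v,v\rangle^{1/2}$ is increasing with $h_t(v)\le\lim_{\tau\to\infty}h_\tau(v)$. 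It is worth stressing that the lower bound is indispensable and enters exactly here: without it $K$ need not be bounded above---for instance $H(t)=\text{diag}(1,t^{-2})$ has convex $K$ yet \emph{decreasing} $h_t$---so the result is genuinely an a priori estimate, in the spirit of the paper's extrapolation principle.
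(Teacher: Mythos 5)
Your proof is correct, and underneath the operator-theoretic packaging it has the same skeleton as the paper's: pass to the dual metric, show it is convex in $t$ and bounded above (hence decreasing), then dualize back. The difference is entirely in how the two duality steps are implemented. The paper proves the more general Banach-bundle statement (Theorem 3.2): Lemma 2.1 gives seminegative Kobayashi curvature of $p^*$, so $\log p^*_t(l)$ is convex for each fixed $l$ (log-convexity, slightly stronger than the plain convexity of $\langle H(t)^{-1}l,l\rangle$ you establish; your bound $K''\ge H^{-1}H'H^{-1}H'H^{-1}$ plus Cauchy--Schwarz would in fact recover it), and the Banach--Hahn theorem transfers the resulting monotonicity of $p^*_t(l)$ back to $p_t(v)$. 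Your Loewner-order computation replaces these abstract ingredients by their concrete Hilbert-space avatars: the identity $(H^{-1})''=H^{-1}(2H'H^{-1}H'-H'')H^{-1}$ together with the correct translation of semipositivity as $H''\le H'H^{-1}H'$ stands in for Lemma 2.1, and anti-monotonicity of inversion stands in for Hahn--Banach. What you gain is a self-contained elementary argument (and your warning that $\log h_t(v)$ is not the concave object, $H^{-1}$ is, is exactly the right instinct); what you lose is the non-Hilbertian generality, where there is no operator $H$ and the paper must pass through $p^{**}$. Two small points: for infinite-rank $E$ one should remark that the $C^2$ hypothesis makes $t\mapsto H(t)$ differentiable in a strong enough sense to justify the product-rule manipulations; and your illustrative example $H(t)=\mathrm{diag}(1,t^{-2})$ has convex $K$ but does not itself satisfy the curvature hypothesis, since $H''=\mathrm{diag}(0,6t^{-4})\not\le H'H^{-1}H'=\mathrm{diag}(0,4t^{-4})$ --- the flat scalar metric $H(t)=e^{-2t}$ is a cleaner witness that the lower bound cannot be dropped.
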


In section 3 we will formulate and prove results more general than Theorems 1.1 and 1.2, that apply to metrics and norms that are not necessarily Hilbertian, see Theorems 3.1, 3.2.

Various problems in analysis of course boil down to estimating a norm on a space $V$. The way to do this by extrapolation is to include the norm in a family $h_s$ of norms, $s\in S$, that satisfies the assumptions of say, Theorem 1.2, and estimate $h_s$ in the limit $\text{Re }s\to\infty$. Whatever estimate one can prove in the limit will then hold for the norm we started with. The success of this approach depends on whether $h_s$ gets easier to estimate as $\text{Re }s\to \infty$; in the application in section 4 this will be so.

The method is reminiscent of the continuity method. In the continuity method one attacks a problem $P$  first by connecting it, through a family $P_t$, $t\in[0,1], P_0=P$, of problems with a problem ${P}_1$ that one knows how to solve, and then by studying the parameter values $t$ for which ${P}_t$ can be solved. In this approach the choice of ${P}_t$ connecting $P$ and $P_1$ is largely arbitrary, and the success of the method depends on proving suitable estimates for the solutions of $P_t$. By contrast, it is estimates that extrapolation furnishes. One still needs to connect problem $P$ through a family $P_t$ with a limiting problem $\lim_{t\to\infty} P_t$ one knows how to solve, but the family $P_t$ should have suitable curvature properties and in its choice is quite restricted.

In harmonic analysis the notion of extrapolation is not new. Its first appearance is in Yano's paper \cite{Y} from 1951 that studies what certain $L^p\to L^p$ estimates of an operator for all $p>1$ imply in the limit $p\to 1$. In the 1980s Rubio de Francia introduced a different notion of extrapolation of operators and estimates, see \cite {Ru1,Ru2}. Knowing that a certain operator is bounded in weighted $L^p$ spaces, for fixed $p\in[1, \infty)$ but simultaneously for a large class of weights, he concludes that the operator is bounded in correspondingly weighted $L^q$ spaces for all $q\in[1,\infty)$.  Originally motivated by Yano's theorem, in the 1990s Jawerth and Milman proposed a general theory of extrapolation. As Mart\'\i n and Milman subsequently pointed out, even Rubio de Francia's results fit in this framework, see \cite{JM, MM}. However, apart from formal similarity, that theory of extrapolation seems to be unrelated to the results in this paper.

I am grateful to Richard Rochberg who pointed me to relevant literature, and suggested that the paper be included in this volume in memory of Bj\"orn Jawerth.

\section{Background}
A holomorphic Banach bundle is a holomorphic map $\pi: E\to S$ of complex Banach manifolds, with each fiber $E_s=\pi^{-1}\{s\}$ endowed with the structure of a complex vector space. It is required that for each $s_0\in S$ there be a neighborhood $U\subset S$, a complex Banach space $W$, and a biholomorphic map $\Phi:U\times W\to\pi^{-1}U$ (a local trivialization) that maps $\{s\}\times W$ to $E_s$ linearly, $s\in U$. If $W$ can be chosen a Hilbert space, we speak of a holomorphic Hilbert bundle.

A metric on $E$ is a locally uniformly continuous function $p:E\to [0,\infty)$ that restricts on each fiber $E_s$ to a norm $p_s$ inducing the topology of the fiber. We measure the curvature of a metric as follows, see also \cite{L}. First, if $D\subset\bC$ is open, $z_0\in D$, and $u:D\to\bR$ is upper semicontinuous, we let 
\begin{equation}
\Lambda u(z_0)=\limsup_{r\to 0}\, r^{-2}\Big(\int^1_0 u(z_0+re^{2\pi i\theta}) d\theta-u(z_0)\Big)\in [-\infty,\infty].
\end{equation}
 In particular, $\Lambda u=\partial^2 u/\partial z\partial\overline z$ when $u\in C^2(D)$. Second, if $S$ is a complex manifold, $u:S\to\bR$ is upper semicontinuous, and $\xi\in T^{1,0}S$, we let
$$\xi\overline\xi u=\inf \Lambda(u \circ f)(0)\in [-\infty, \infty],$$
where the $\inf$ is taken over holomorphic maps $f$ of some neighborhood of $0\in\bC$ into $S$ that send $\partial/\partial z\in T_0^{1,0}\bC$ to $\xi$. 

Third, returning to a holomorphic Banach bundle $E\to S$ endowed with a metric $p$, we define the Kobayashi curvature $K_\xi(v)$ of $p$, for $\xi\in T^{1,0}_s S$, $v\in E_s\setminus \{0\}$, by
\begin{equation}
K_\xi(v)=-\inf\, \xi\overline\xi\log p(\varphi)(s),
\end{equation}
the $\inf$ taken over sections $\varphi$ of $E$, holomorphic near $s$, such that $\varphi(s)=v$. For example, by \cite[ Lemma 2.3]{L} $K_\xi(v)\le 0$ for all $\xi, v$ if and only if $\log p(\varphi)$ is plurisubharmonic for all local holomorphic sections of $E$. (The above definition (2.2) of curvature differs from [L, (2.6)] by a factor of 2.) If $E$ is a Hilbert bundle, $p$ is the metric associated with a hermitian metric $h$ of class $C^2$, and $R$ is the curvature operator of $h$ (so it is an End $E$ valued $(1,1)$ form on $S$), then
$$K_\xi(v)=\frac{h(v, R(\xi,\overline\xi)v)}{ 2 h(v,v)}.$$

The duals $E^*_s$ of the fibers $E_s$ of a holomorphic Banach bundle also form a holomorphic Banach bundle, denoted $E^*$. If $E$ was locally isomorphic to $U\times W$, then $E^*$ will be locally isomorphic to $U\times W^*$. If $E$ is endowed with a metric $p$, the norms $p^*_s$ on $E^*_s$, dual to the norms $p_s$, form a metric on $E^*$, denoted $p^*$. Let $K^*$ stand for the Kobayashi curvature of $p^*$.

From now on we assume $\dim S<\infty$.

\begin{lem}
If the Kobayashi curvature of $(E^*, p^*)$ is semipositive $(K^*\ge 0)$, then the Kobayashi curvature of $(E,p)$ is seminegative $(K\le 0)$.
\end{lem}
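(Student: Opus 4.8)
The plan is to deduce the conclusion from the cited characterization in \cite[Lemma 2.3]{L}: the curvature $K$ of $(E,p)$ satisfies $K\le 0$ exactly when $\log p(\varphi)$ is plurisubharmonic for every local holomorphic section $\varphi$ of $E$. So it suffices to fix such a $\varphi$ and prove that $\log p(\varphi)$ is plurisubharmonic, feeding in the hypothesis $K^*\ge 0$ through the duality between $p$ and $p^*$. The mechanism I would use is to produce, at each point, a subharmonic minorant of $\log p(\varphi)$ that touches it there; the minorant is manufactured from a holomorphic section of $E^*$ extending a norming functional, together with the elementary duality $p(v)=\sup\{|\langle\eta,v\rangle|/p^*(\eta)\}$.

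Concretely, I would fix $s_0\in S$ and a holomorphic disc $f$ through $s_0$ with tangent $\xi$, and set $u=\log p(\varphi)\circ f$. If $\varphi(s_0)=0$ then $u(0)=-\infty$ and the sub\-mean value inequality at $0$ is vacuous, so assume $\varphi(s_0)\ne 0$ and pick, by Hahn--Banach, a functional $\eta_0\in E^*_{s_0}$ with $p^*(\eta_0)=1$ and $\langle\eta_0,\varphi(s_0)\rangle=p(\varphi(s_0))$. The hypothesis $K^*\ge 0$, that is $\inf_\psi\,\xi\overline\xi\log p^*(\psi)(s_0)\le 0$ over holomorphic sections $\psi$ of $E^*$ with $\psi(s_0)=\eta_0$, will be used to extend $\eta_0$ to such a $\psi$ for which $\log p^*(\psi)$ is superharmonic at $s_0$ along $f$. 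Granting this $\psi$, the pairing $h(z)=\langle\psi(f(z)),\varphi(f(z))\rangle$ is holomorphic, and $W=\log|h|-\log p^*(\psi)\circ f$ satisfies $W\le u$ everywhere by duality, with equality at $0$ since $\eta_0$ is norming and $p^*(\eta_0)=1$. As $h(0)=p(\varphi(s_0))\ne 0$, the term $\log|h|$ is harmonic near $0$, so the superharmonicity of $\log p^*(\psi)\circ f$ makes $W$ sub\-mean valued at $0$; hence $u(0)=W(0)$ is at most the circular averages of $W$, which are dominated by those of $u$. Letting $s_0$ and $f$ vary yields plurisubharmonicity of $\log p(\varphi)$, and the lemma follows.

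The main obstacle is the middle step: extracting from the pointwise inequality $K^*\ge 0$ a holomorphic section $\psi$ through $\eta_0$ whose log\-norm is superharmonic in the required direction. When the metric is of class $C^2$ this is routine, for then $\Lambda(\log p^*(\psi)\circ f)(0)$ equals the complex Hessian of $\log p^*(\psi)$ contracted with $\xi$ and so is independent of the disc $f$ with tangent $\xi$; choosing the $1$\-jet of $\psi$ to annihilate the first\-order (Cauchy--Schwarz) contribution leaves precisely the dual curvature term, which $K^*\ge 0$ renders $\le 0$. In the merely continuous setting the quantity $\Lambda(\log p^*(\psi)\circ f)(0)$ genuinely depends on $f$, and the infimum over discs built into the definition of $\xi\overline\xi$ delivers superharmonicity only along \emph{some} disc tangent to $\xi$, not necessarily the disc $f$ along which $\log p(\varphi)$ is being tested. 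Reconciling these two discs---either by arguing directly at the level of the operator $\Lambda$ or by approximating $p^*$ by $C^2$ metrics that still satisfy $K^*\ge 0$---is the delicate point on which the proof turns.
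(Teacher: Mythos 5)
Your duality mechanism---Hahn--Banach norming functional $\eta_0$, a holomorphic section $\psi$ of $E^*$ through it, and the harmonic function $\log|\langle\psi\circ f,\varphi\circ f\rangle|$ squeezed under $\log p^*(\psi)\circ f+\log p(\varphi)\circ f$ with equality at the center---is exactly the computation in the paper's proof. But the step you flag at the end as ``the delicate point on which the proof turns'' is a genuine gap, and the two escape routes you sketch are not the ones that work. The order of quantifiers in the definitions is the issue: $K^*_\xi(\eta_0)\ge 0$ only supplies, for each $\xi$, \emph{some} pair $(\psi,f)$ (a section and a disc, chosen together to nearly realize the infimum defining $\xi\overline\xi$) with $\Lambda\log p^*(\psi\circ f)(0)\le\delta$; you cannot first fix the disc $f$ and then ask for a $\psi$ superharmonic along it, and approximating $p^*$ by $C^2$ metrics preserving $K^*\ge 0$ is not available for a general continuous metric. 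The paper's resolution is to change the plurisubharmonicity criterion rather than the disc: it proves a strengthened Saks-type lemma (Lemma 2.2) asserting that an upper semicontinuous $w$ is plurisubharmonic as soon as for every $\xi\in T^{1,0}S$ there exists \emph{one} holomorphic disc $f$ tangent to $\xi$ with $\Lambda(w\circ f)(0)\ge 0$; the proof runs through a maximum principle against $C^2$ comparison functions $u$ with $\partial\overline\partial u(\xi,\overline\xi)=0$ in some direction, not through mean-value inequalities on circles as in your sketch. With that lemma in hand, one simply accepts whatever disc the curvature hypothesis hands over. Without it (or an equivalent), your argument only establishes the submean value inequality along one disc per direction, which for a merely upper semicontinuous function does not by itself give plurisubharmonicity.

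Two smaller points. First, to get a pair $(\psi,f)$ with $\Lambda\log p^*(\psi\circ f)(0)\le 0$ rather than merely $\le\delta$ for every $\delta>0$, the paper first replaces $p$ by $pe^{\varepsilon|s|^2}$ so that $K^*$ becomes strictly positive, and removes $\varepsilon$ at the end; your proposal silently assumes the infimum value $0$ is attained. Second, your dismissal of the zero set of $\varphi$ as ``vacuous'' is too quick for the same reason---the paper instead concludes plurisubharmonicity where $\log p(\varphi)$ is finite and then extends across the $-\infty$ locus.
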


When $p$ is a Hilbertian metric, the converse is also true, but we do not know whether this converse holds in general. We also do not know whether in general $K^*\le 0$ is equivalent to $K\ge 0$.---For the proof of the lemma  we need a characterization of plurisubharmonicity:

\begin{lem}
Suppose $w:S\to\bR$ is upper semicontinuous, and for every $\xi\in T^{1,0} S$ there is a holomorphic map $f$ of a neighborhood of $0\in\bC$ into $S$, sending $\partial/\partial z\in T^{1,0}_0\bC$ to $\xi$, such that
\begin{equation}
\Lambda (w\circ f)(0)\ge 0.
\end{equation}
Then $w$ is plurisubharmonic.
\end{lem}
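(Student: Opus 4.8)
The plan is to characterize plurisubharmonicity through $C^2$ functions that touch $w$ from above, and to feed each such test function into the \emph{single} good curve supplied by the hypothesis. Since plurisubharmonicity is local and invariant under biholomorphisms, and $\dim S<\infty$, I may work in a holomorphic chart $\Omega\subset\bC^n$. The tool I would invoke is the viscosity-type description of psh functions: an upper semicontinuous $w$ on $\Omega$ is plurisubharmonic if and only if, whenever $q\in C^2$ near a point $p$ and $w-q$ attains a local maximum at $p$, the Levi form $\sum_{i,j}(\partial^2 q/\partial\zeta_i\partial\bar\zeta_j)(p)\,\xi_i\bar\xi_j$ is $\ge 0$ for every $\xi\in T^{1,0}_pS$. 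Granting this, the proof is short.

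So suppose $q\in C^2$ and $w-q$ has a local maximum at $p$, and fix $\xi\in T^{1,0}_pS$; I must show the Levi form of $q$ at $p$ in the direction $\xi$ is nonnegative (the case $\xi=0$ being trivial). By hypothesis there is a holomorphic $f$ with $f(0)=p$, $df(\partial/\partial z)=\xi$ and $\Lambda(w\circ f)(0)\ge 0$. Because $f(0)=p$ and $w-q\le(w-q)(p)$ near $p$, the function $w\circ f-q\circ f$ has a local maximum at $0$; hence its circle averages in (2.1) do not exceed its value at $0$, giving $\Lambda(w\circ f-q\circ f)(0)\le 0$. Now $q\circ f\in C^2$, so the subtracted term contributes a genuine limit and $\Lambda$ is additive against it: $\Lambda(w\circ f-q\circ f)(0)=\Lambda(w\circ f)(0)-\Lambda(q\circ f)(0)$. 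Combining, $\Lambda(q\circ f)(0)\ge\Lambda(w\circ f)(0)\ge 0$. Finally, since $q$ is $C^2$ we have $\Lambda(q\circ f)(0)=\sum_{i,j}(\partial^2 q/\partial\zeta_i\partial\bar\zeta_j)(p)\,\xi_i\bar\xi_j$, which depends only on $\xi=df(\partial/\partial z)$ and not on the higher-order part of $f$. This is exactly the Levi form in the direction $\xi$, so it is $\ge 0$, and the characterization yields that $w$ is plurisubharmonic.

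The main obstacle is the characterization itself, i.e.\ proving that the touching condition implies plurisubharmonicity for a merely upper semicontinuous $w$; this is where the real work sits. I would reduce it to the one-variable statement that an upper semicontinuous $u$ with $\Lambda u\ge 0$ everywhere on a planar domain is subharmonic (the Blaschke--Privalov theorem), which in turn rests on the perturbation $u+\var(|z|^2-\rho^2)$ and the observation that a function with $\Lambda>0$ cannot have an interior maximum. The delicate point in passing from one variable to several is that the good curve $f$ is generally \emph{not} the restriction of $w$ to the complex line through $p$ in the direction $\xi$: since $w$ is only upper semicontinuous, the $O(r^2)$ discrepancy between $f$ and that line can alter circle averages by non-negligible amounts, so one cannot directly compare $\Lambda(w\circ f)(0)$ with the Laplacian of $w$ along the line. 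Using test functions defined on a full neighborhood circumvents this, because such a $q$ dominates $w$ along \emph{every} curve through $p$, in particular along $f$; the technical cost is extending a one-variable test function from a line to a neighborhood (by adding a large multiple of the squared distance to the line) so that it still touches $w$ from above while leaving the Levi form in the $\xi$-direction unchanged. Verifying that this extension stays above $w$ near $p$, using only upper semicontinuity, is the step I would expect to require the most care.
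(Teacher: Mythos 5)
Your reduction of the lemma to the viscosity characterization of plurisubharmonicity is correct and clean: given a $C^2$ test function $q$ with $w-q$ maximal at $p$ and a direction $\xi$, the single curve $f$ supplied by the hypothesis does yield $\Lambda(q\circ f)(0)=\partial\overline\partial q(\xi,\overline\xi)\ge \Lambda(w\circ f)(0)\ge 0$, since $\Lambda$ is additive against a $C^2$ summand and $\Lambda(q\circ f)(0)$ depends only on the $1$-jet of $f$. This is a genuinely different organization from the paper's, which never passes through arbitrary test functions: there one perturbs $w$ to $w+\varepsilon|s|^2$ to make the hypothesis strict and runs a Saks-type maximum principle directly against the specific comparison functions $u(s)=h(s_1)+c(|s_2|^2+\cdots+|s_m|^2)$, whose Levi form annihilates $\partial/\partial s_1$; the sub-mean-value inequality on discs then falls out at once, with no need to restrict $w$ to complex lines.

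However, the step you yourself identify as carrying the real weight --- the hard direction of the viscosity characterization --- is sketched with a mechanism that fails. You propose to extend a one-variable test function $\phi$ on a line $L$ to $q_C=\phi\circ\pi+C\,\mathrm{dist}(\cdot,L)^2$ and to verify that $q_C$ ``stays above $w$ near $p$.'' No finite $C$ achieves this in general: take $w(z_1,z_2)=|z_2|$, $L=\{z_2=0\}$, $\phi=0$, $p=0$; then $w-q_C=|z_2|-C|z_2|^2>0$ for $0<|z_2|<1/C$, so $q_C$ never touches $w$ from above at $0$ for any $C$. The standard repair is different in nature: after strictifying the maximum of $w|_L-\phi$ at $p$, one lets the maximum of $w-q_C$ over a fixed compact bidisc occur wherever it occurs, at a point $p_C$; the upper bound on $w$ forces $\mathrm{dist}(p_C,L)=O(C^{-1/2})$, and upper semicontinuity forces $p_C\to p$, so one applies the touching condition at the moving point $p_C$ and lets $C\to\infty$ to get $\Delta\phi(p)\ge 0$. (Alternatively, once you have the touching condition in strict form for $w+\varepsilon|s|^2$, you can bypass lines entirely and finish with the paper's comparison functions $h(s_1)+c(|s_2|^2+\cdots+|s_m|^2)$.) As written, the hardest step of your argument rests on a false claim, so the proof is incomplete, though the route is viable and the first half is right.
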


In \cite{L} we already used a similar result. In Lemma 2.3 there the assumption was that (2.3) holds for all holomorphic maps $f$, which allowed us to quote a theorem of Saks, \cite{Sa}. It turns out that Saks's proof, through a maximum principle, can be tweaked to prove the stronger Lemma 2.2.

\begin{proof}
We can assume that $S$ is an open subset of some $\bC^m$. It follows from (2.1) that if $w\in C^2(S)$ then $\Lambda(w\circ f)(0)=\xi\overline\xi w=\partial\overline\partial w(\xi,\overline\xi)$, and the $\limsup$ there is a limit. Of course, the point of Lemma 2.2 is that $w$ is just assumed upper semicontinuous. However, upon replacing $w(s)$ by $w(s)+\varepsilon|s|^2$, with $\varepsilon >0$, we can assume without loss of generality that whenever $\xi\in T^{1,0}S$ is nonzero, in (2.3) the strict inequality
$$
\Lambda(w\circ f)(0)>0
$$
holds. Consider now a $u\in C^2(S)$ such that for every $s\in S$ there is a nonzero $\xi\in T^{1,0}_s S$ with $\xi\overline\xi u=\partial\overline\partial u(\xi,\overline\xi)=0$. If $f$ is chosen as in the lemma, then $\Lambda(w\circ f - u\circ f)>0$, which shows that $w-u$ cannot have a local maximum in $S$.

To prove that $w$ is plurisubharmonic, we need to take a closed disc $\Delta \subset S$, which we assume to be 
$$\Delta=\{s\in\bC^m: |s_1|\le r,\quad s_2=\dots=s_m=0\};$$
a function $h\in C(\Delta)$, harmonic on the open disc, such that $u<h$ on the boundary of $\Delta$; and prove that this implies $u<h$ on all of $\Delta$. Choose $\delta>0$ so that
$$P=\{s\in\bC^m: |s_1|\le r,\quad |s_2|,\dots , |s_m|\le\delta\}\subset S,$$
and define
 $$u(s)=h(s_1,0,\ldots,0)+c(|s_2|^2+\cdots+|s_m|^2),\qquad s\in P,$$
 where $c>0$ is so large that $w<u$ on $\partial P$. Then $\xi\overline\xi u=0$ when $\xi=\partial / \partial s_1$. By what we have said above, this implies $w-u$ has no local maximum in int $P$, and so it is everywhere negative. In particular, $w<u=h$ on $\Delta$, as needed.
\end{proof}

\begin{proof}[ Proof of Lemma 2.1] 
We need to show that $\log p(\varphi)$ is plurisubharmonic for any local holomorphic section $\varphi$ of $E$. Again we can assume $S$ is an open subset of some $\bC^m$, and upon replacing $p$ by $pe^{\varepsilon|s|^2}$, $\varepsilon >0$, that $K^*_\xi(l)>0$ when $\xi\ne 0, l\ne 0$. Fix $s\in S$ and assume $\varphi(s)=v\ne 0$. By the Banach--Hahn  theorem there is a linear form $l\in E^*_{s}$ of norm 1 such that $\langle l, v\rangle=p(v)$. Let $\xi\in T^{1,0}_{s} S \setminus \{0\}$. As $K^*_\xi(l)>0$, there are a section $\psi$ of $E^*$, holomorphic near $s$, such that $\psi(s)=l$, and a holomorphic map $f$ of some neighborhood of $0\in\bC$ into $S$, sending $\partial / \partial z\in T^{1,0}_0\bC$ to $\xi$, these two satisfying
\begin{equation}
\Lambda \log p^*(\psi\circ f)(0)\le 0.
\end{equation}
At the same time
$$\log |\langle \psi\circ f, \varphi\circ f\rangle|\le \log p^*(\psi\circ f)+\log p(\varphi\circ f),$$
with equality at $0\in\bC$. Since the left hand side is harmonic near 0, by (2.4)
$$\Lambda \log p(\varphi\circ f)(0)\ge 0.$$
Applying Lemma 2.2 with $w=\log p(\varphi)$ we obtain that $\log p(\varphi)$ is plurisubharmonic wherever it is finite; and this implies it is plurisubharmonic in fact everywhere.
\end{proof}

We will need one more lemma, familiar in the context of hermitian vector bundles.

\begin{lem}
Suppose $F\to S$ is a holomorphic Banach bundle with reflexive fibers and $q$ is a metric on $F$. Letting $E\to S$ be another holomorphic Banach bundle, $B:F\to E$ a holomorphic epimorphism, and $p$ the induced metric on $E$:
\begin{equation}
p(v)=\inf\{q(w)\,:\, Bw=v\},\qquad v\in E,
\end{equation}
the curvatures of $p$ and $q$ are related by
\begin{equation}
K_\xi^p(v)\ge\inf\{K_\xi^q(w)\,:\, Bw=v\}.
\end{equation}
In particular, $K^p\ge 0$  if $K^q\ge 0$.
\end{lem}

\begin{proof}
The point is that in (2.5) the inf is attained. Indeed, if $s\in S$ and $v\in E_s$ is a nonzero vector, the kernel of $B|F_s$  being proximal (as any subspace of $F_s$, see \cite[V.4.6]{C}), $B^{-1}(v)$ contains  a shortest vector, that we will denote $w$. Given now a holomorphic map $f$ of a neighborhood of $0\in\bC$ into $S$, sending $\partial/\partial z\in T_0^{1,0}\bC$ to some  $\xi\in T_s^{1,0}S$, and a section $\psi$ of $F$, holomorphic near $s$, satisfying $\psi(s)=w$, let $\varphi=B\circ\psi$. Then 
$$
p(\varphi\circ f)\le q(\psi\circ f),
$$
with equality at $0\in\bC$. Hence
$$
\Lambda \log p(\varphi\circ f)\le\Lambda\log q(\psi\circ f)
$$
at $0$, and so $K_\xi^p(v)\ge K_\xi^q(w)$ by (2.2).
\end{proof}

We do not know if the lemma holds without the assumption of reflexivity.

For bundles of finite rank and for smooth metrics S. Kobayashi and Rochberg were the first to put forward the notions and results discussed above, see \cite{K, Ro}. In particular, Rochberg was the first to realize the connection between curvature and interpolation. 

\section{The main results}
The following results generalize Theorems 1.1 and 1.2:

\begin{thm} (i) Consider a holomorphic Hilbert bundle $E\to S$, endowed with a hermitian metric $h$ of class $C^2$, and a holomorphic Banach bundle $F\to S$ endowed with a metric $p$. If a holomorphic homomorphism $A:E\to F$ decreases curvature in the sense that 
$$K^p_\xi (Av)\le K^h_\xi(v) \text{ for all } s\in E_s, \xi\in T^{1,0}_s S, \text{ and } v\in E_s, Av\ne 0,
$$
then $\log\|A\|$is plurisubharmonic.

(ii) In particular, if $S$ is a half plane $S=\{s\in\bC: \text{Re  }s>\sigma_0\}, \|A\|$ is bounded, and $\|A\|(s)=\|A\|(\text{Re }s)$, then $\|A\|(t)$ is a decreasing function of $t\in(\sigma_0, \infty)$.
\end{thm}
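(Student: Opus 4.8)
The plan is to deduce (i) from Lemma 2.2 and then obtain (ii) by a short convexity argument. Write $w=\log\|A\|$, and let $q$ denote the norm on the fibers of $E$ induced by $h$, $q_s(v)=h_s(v,v)^{1/2}$, so that $K^h=K^q$ in the sense of (2.2) and $\|A\|(s)=\sup_{v\ne0}p(A_sv)/q_s(v)$. Since $A$ is holomorphic and $h,p$ vary continuously, $w$ is continuous, hence upper semicontinuous, and Lemma 2.2 applies once I produce, for each $s_0\in S$ and each $\xi\in T^{1,0}_{s_0}S$, a holomorphic $f$ with $f_*(\partial/\partial z)=\xi$ and $\Lambda(w\circ f)(0)\ge0$. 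The idea is to bound $w$ from below by a single function $g_\varphi(s)=\log p(A\varphi(s))-\log q_s(\varphi(s))$, with $\varphi$ a holomorphic section of $E$ chosen so that $g_\varphi$ touches $w$ at $s_0$ and so that the curvature hypothesis forces $\Lambda(g_\varphi\circ f)(0)\ge0$.

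Concretely, I would first pick $v_0\in E_{s_0}$ with $q_{s_0}(v_0)=1$ and $p(Av_0)=\|A\|(s_0)$; assuming $\|A\|(s_0)>0$ this gives $Av_0\ne0$. Because $(E,h)$ is Hilbertian of class $C^2$, the infimum defining $K^h_\xi(v_0)$ in (2.2) is attained: there is a holomorphic section $\varphi$ with $\varphi(s_0)=v_0$ satisfying $\partial\overline\partial\log q(\varphi)(\xi,\overline\xi)=-K^h_\xi(v_0)$ at $s_0$ (the optimal $\varphi$ is covariantly horizontal in the direction $\xi$, the computation reproduced in \cite{L}). By the definition of the operator norm, $g_\varphi\le w$ near $s_0$, with equality at $s_0$ since $q_{s_0}(v_0)=1$; hence for every admissible $f$ the comparison of circular means gives $\Lambda(w\circ f)(0)\ge\Lambda(g_\varphi\circ f)(0)$. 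As $q(\varphi)$ is $C^2$ near $s_0$, the generalized Laplacian splits, and using that for a $C^2$ function $\Lambda(\cdot\circ f)(0)=\partial\overline\partial(\cdot)(\xi,\overline\xi)$ I get
$$\Lambda(g_\varphi\circ f)(0)=\Lambda(\log p(A\varphi)\circ f)(0)+K^h_\xi(v_0).$$
Now every $f$ obeys $\Lambda(\log p(A\varphi)\circ f)(0)\ge\xi\overline\xi\log p(A\varphi)(s_0)\ge-K^p_\xi(Av_0)$, the second inequality because $A\varphi$ is one of the competing sections in the definition (2.2) of $K^p_\xi(Av_0)$. Combining with the curvature-decreasing hypothesis $K^p_\xi(Av_0)\le K^h_\xi(v_0)$ yields $\Lambda(g_\varphi\circ f)(0)\ge K^h_\xi(v_0)-K^p_\xi(Av_0)\ge0$, whence $\Lambda(w\circ f)(0)\ge0$. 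Lemma 2.2 then shows $w$ is plurisubharmonic where it is finite, and, exactly as at the end of the proof of Lemma 2.1, plurisubharmonic everywhere.

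For (ii), once $w$ is known plurisubharmonic on the half plane and $\|A\|(s)=\|A\|(\text{Re }s)$, the function $\phi(t)=\log\|A\|(t)$ makes $u(s)=\phi(\text{Re }s)$ plurisubharmonic and invariant under imaginary translations, which forces $\phi$ to be convex on $(\sigma_0,\infty)$. A convex function bounded above on the half line $(\sigma_0,\infty)$ must be nonincreasing, since a positive slope at any point would, by convexity, drive $\phi$ to $+\infty$. Hence $\phi$, and therefore $\|A\|(t)=e^{\phi(t)}$, is a decreasing function of $t$.

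The main obstacle I anticipate is producing the comparison section $\varphi$ together with attainment of the operator norm at a genuine maximizer $v_0$. Attainment is automatic for finite rank bundles by compactness of the unit sphere, and is exactly what makes the touching identity $g_\varphi(s_0)=w(s_0)$ hold; for infinite rank Hilbert fibers, where the norm need not be attained, near-maximizers destroy the $r^{-2}$ comparison of circular means, and one must instead exhaust $E$ by finite dimensional holomorphic subbundles and pass to the limit in the plurisubharmonicity. The remaining ingredient---that the infimum in (2.2) for the $C^2$ Hilbertian metric $h$ is realized by a section with $\partial\overline\partial\log q(\varphi)(\xi,\overline\xi)=-K^h_\xi(v_0)$---is the standard curvature-of-a-section computation and should be quoted rather than redone.
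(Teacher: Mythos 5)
Your part (ii) is exactly the paper's argument (subharmonicity plus independence of $\operatorname{Im}s$ gives convexity of $\log\|A\|(t)$, and a convex function bounded above on a half-line decreases), so nothing to add there. For part (i) the paper gives no proof at all: it cites \cite[Theorem 2.4]{L} (and \cite{CS} for finite rank). Your reconstruction --- touch $w=\log\|A\|$ from below at $s_0$ by $g_\varphi=\log p(A\varphi)-\log q(\varphi)$ with $\varphi$ the osculating section realizing $K^h_\xi(v_0)$, compare circular means, split off the $C^2$ term, and invoke Lemma 2.2 --- is the right argument and is complete whenever the fiberwise operator norm is attained, in particular for finite rank $E$. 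You also correctly diagnose why near-maximizers fail: an error of size $\delta$ at the touching point contributes $-\delta r^{-2}$ to the generalized Laplacian.

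The genuine gap is your proposed cure for the infinite-rank case. Exhausting $E$ by finite-dimensional holomorphic subbundles $E'$ does not reduce to the finite-rank case, for two linked reasons. First, restricting $h$ to $E'$ shrinks the class of competing sections in (2.2), so $K^{h|E'}_\xi(v)\le K^h_\xi(v)$ (for Hilbertian metrics this is the second fundamental form term); the inequality goes the wrong way, and the hypothesis $K^p_\xi(Av)\le K^{h|E'}_\xi(v)$ for $A|E'$ is not inherited. Second, and equivalently, the touching inequality $g_\varphi\le\log\|A|E'\|$ forces $\varphi$ to be a section of $E'$, whereas the osculating section with $D'\varphi(s_0)=0$ that realizes $K^h_\xi(v_0)$ generally leaves $E'$; you cannot choose $E'$ to contain it because the maximizer $v_0$ itself depends on $E'$. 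So $\log\|A|E'\|$ need not be plurisubharmonic, and writing $\log\|A\|$ as an increasing supremum of such functions proves nothing. In infinite rank one genuinely needs the different device of \cite{L} (which the paper is content to quote); as written, your argument establishes (i) only under the additional assumption that each $A_s$ attains its norm.
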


\begin{proof}
Part (i) was proved, in a somewhat greater generality, in \cite[Theorem 2.4]{L}; see also the discussion at the end of that paper. When $\text{rk }E<\infty$, this was proved earlier in \cite{CS}, again in greater generality. Part (ii) immediately follows: under the assumptions $\log\|A\|(s)$ is subharmonic and independent of $\text{Im }s$, hence $\log\|A\|(t)$ is convex. If it is also bounded above, it must decrease.
\end{proof}

Somewhat related convexity theorems have already occurred in the framework of interpolation; the earliest seem to be in \cite{H, St}.

\begin{thm}
Consider a trivial Banach bundle $E=S\times V\to S$ over a half plane $S=\{s\in\bC: \text{Re }s>\sigma_0\}$, endowed with a metric $p$ such that $p_s$, viewed as norms on $V$, depend only on $\text{Re }s$. Assume that the second dual $p^{**}$ of $p$ has semipositive Kobayashi curvature $K^{**}\ge 0$. If for all $l\in V^*$, or at least for $l$ in a dense subset of $V^*$, 
\begin{equation}
\sup \big\{p^*_t (l):\quad t\in (\sigma_0,\infty)\big\}<\infty,
\end{equation}
then $p_t(v)$ is an increasing function of $t$, for all $v\in V$. In particular, $p_t(v)\le\lim\limits_{\tau\to\infty}p_{\tau}(v)$. 
\end{thm}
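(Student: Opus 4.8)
The plan is to dualize: rather than working with $p$ directly, I would study the dual metric $p^*$ on $E^*=S\times V^*$, show that $p^*_t(l)$ is a \emph{decreasing} function of $t$, and then recover the monotonicity of $p_t$ by biduality. Since $p_s$ depends only on $\text{Re }s$, so does $p^*_s$, and $E^*$ is again a trivial bundle over the same half plane, so the framework is symmetric under passing to duals.

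First I would feed the curvature hypothesis into Lemma 2.1. Applying that lemma with $E^*$ in place of $E$ (so that the role of $E^*$ is played by $E^{**}$ and that of $p^*$ by $p^{**}$), the assumption $K^{**}\ge 0$ yields $K^*\le 0$, i.e. the Kobayashi curvature of $(E^*,p^*)$ is seminegative. By the characterization recorded after (2.2)---that $K^*\le 0$ is equivalent to $\log p^*(\psi)$ being plurisubharmonic for every local holomorphic section $\psi$ of $E^*$---and taking $\psi$ to be the constant section $l\in V^*$, the function $s\mapsto\log p^*_s(l)$ is subharmonic on $S$ for every $l$. As it is independent of $\text{Im }s$, it is a convex function of $t=\text{Re }s$, exactly as in the proof of Theorem 3.1(ii).

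Next I would combine convexity with the boundedness hypothesis (3.2). For $l$ in the given dense subset of $V^*$, the function $\log p^*_t(l)$ is convex and bounded above on $(\sigma_0,\infty)$, hence nonincreasing; so $p^*_t(l)$ is a decreasing function of $t$. For an arbitrary $l\in V^*$ I would approximate it by elements $l_n$ of the dense subset and use the continuity of the norm $p^*_t$ on $V^*$ (each $p^*_t$ induces the topology of $V^*$, being dual to a norm $p_t$ equivalent to that of $V$): then $p^*_t(l_n)\to p^*_t(l)$ for each $t$, and a pointwise limit of decreasing functions is decreasing. Thus $p^*_t(l)$ is decreasing for \emph{every} $l\in V^*$.

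Finally I would transfer back through the bidual. The canonical embedding $V\hookrightarrow V^{**}$ is isometric, so $p_t(v)=p^{**}_t(v)=\sup\{|\langle l,v\rangle|:l\in V^*,\ p^*_t(l)\le 1\}$. If $\sigma_0<t_1<t_2$ and $p^*_{t_1}(l)\le 1$, then by the previous step $p^*_{t_2}(l)\le p^*_{t_1}(l)\le 1$, so the supremum defining $p_{t_1}(v)$ is taken over a subset of that defining $p_{t_2}(v)$; hence $p_{t_1}(v)\le p_{t_2}(v)$, which is the asserted monotonicity. The principal conceptual step---and the one I expect to carry the real content---is the decision to dualize together with the observation that shrinking dual unit balls force the primal norm to grow; once this is in place the curvature input enters cleanly through Lemma 2.1, and the remaining ingredients (convexity from subharmonicity, monotonicity from boundedness, the density reduction, and the biduality identity) are standard.
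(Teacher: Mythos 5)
Your proposal is correct and follows essentially the same route as the paper: Lemma 2.1 applied to $E^*$ gives $K^*\le 0$, subharmonicity plus independence of $\operatorname{Im}s$ gives convexity, boundedness plus density gives that $p^*_t(l)$ decreases for all $l$, and the final transfer back to $p_t(v)$ via the Hahn--Banach/biduality formula is the same argument the paper phrases with a norming functional.
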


When $p$ is a Hilbertian metric, and more generally, when $V$ is reflexive, $(E^{**}, p^{**})$ and $(E, p)$ are isometrically isomorphic, so that the assumption $K^{**}\ge 0$ is the same as $K\ge 0$. But for a general $V$ we could not prove the theorem just assuming $K\ge 0$. 

\begin{proof}
By Lemma 2.1 the dual metric $p^*$ has seminegative curvature. Hence for any $l\in V^*$ the function $s\mapsto \log p^*_s(l)$ is subharmonic and $\log p^*_t(l)$ is convex in $t\in(\sigma_0,\infty)$. Knowing for a dense set of $l$'s that the latter function is bounded above implies it decreases, and then by continuity it in fact decreases for all $l\in V^*$.

Let now $v\in V$ and $t>\sigma_0$. By the Banach--Hahn theorem there is a nonzero $l\in V^*$ such that $\langle l, v\rangle=p^*_t(l)p_t(v)$. Thus
$$|\langle l, v\rangle|\le p^*_{\tau}(l) p_{\tau}(v)\quad \text{ for all }\tau\in(\sigma_0,\infty),$$
with equality when $\tau=t$. Since $p^*_{\tau}(l)\le p^*_t(l)$ when $\tau > t$, we must have $p_{\tau}(v)\ge p_t(v)$. This proves that $p_t(v)$ is an increasing function of $t$.
\end{proof}

\section{Application. Extending holomorphic sections}
We illustrate the technique of extrapolation, Theorem 3.2, by deriving an Ohsawa--Takegoshi type extension theorem. Ohsawa--Takegoshi type refers to extending from a submanifold $Y$ of a complex manifold $X$ holomorphic sections of certain vector bundles to all of $X$; the $L^2$ norm of the extension should be controled by the $L^2$ norm of the data, and the constant in this estimate should depend on crude geometric properties of $X$ and $Y$. Following the original work of Ohsawa and Takegoshi \cite{OT} a great many instances of such theorems were discovered---the list would be too long to reproduce here. In this section we will discuss a version and its proof that is a variant of what is done in \cite{BL}. The reader will be able to trace back to \cite{BL} most of the ideas in our current proof. If there is improvement it is that some of the ideas in \cite{BL} are no longer needed here. The advantage of this approach is that it is quite painless, and produces arguably sharp estimates. In the setting we treat here we recover the estimates that B\l ocki and Guan--Zhou obtain in \cite{Bl, GZ} by a different approach.

We will consider the problem to extend sections of the canonical bundle $K_X$ of an $m$ dimensional Stein manifold $X$. We could deal in the same way with bundles obtained by twisting $K_X$ by a Nakano semipositive vector bundle, but to keep notation simple we refrain from doing so. For sections $f$ of $K_X$, i.e., for $(m,0)$ forms, there is a natural $L^2$ norm $|\int_Xf\wedge\overline f|$; however, for sections of the restriction of $K_X$ to a submanifold $Y$ an $L^2$ norm, which will depend on the geometry of $Y$, has to be defined.

We start with an oriented smooth manifold $X$ of dimension $a<\infty$, and a submanifold $Y\subset X$ of dimension $b$. We assume given a function $r:X\to [0,\infty)$, of class $C^3$, whose zero set is $Y$, and whose critical points on $Y$ are transversely nondegenerate. Let $\cK\to X$ denote the bundle of real valued forms of (maximal) degree $a$. If $\omega\in\cK$, we denote by $|\omega|$ either $\omega$ or $-\omega$, whichever is a nonnegative multiple  of the orientation form. Given any compactly supported continuous section $\varphi$ of $\cK|Y$, we extend it to a compactly supported continuous section $\psi$ of $\cK$, and define an $L^1$--type norm of $\varphi$ by
\begin{equation}
\|\varphi\|_1=\lim_{\varepsilon\to 0}\varepsilon^{b-a}\int\limits_{\{x\in X: r(x)<\varepsilon^2\}}\, |\psi|.
\end{equation}
Near any point of $Y$ we can choose coordinates $x_1,\dots, x_a$ on $X$ so that $r=x^2_{b+1}+\cdots+x^2_a$. If $\varphi$ and $\psi$ are supported in this coordinate neighborhood $U$, and $\varphi=f dx_1\wedge\dots\wedge dx_a$, one easily computes that the limit in (4.1) exists and 
$$\|\varphi\|_1=\sigma \int_{Y\cap\, U} |f|\, dx_1\dots dx_b,$$
independently of the choice of $\psi$. Here $\sigma$ is the volume of the unit ball in $\bR^{a-b}$. By a partition of unity it follows that the limit in (4.1) always exists, and is independent of how we choose $\psi$.

Next, if $\varphi$ is a general continuous section of $\cK|Y$, we define $\|\varphi\|_1=\sup \|{\varphi}'\|_1\le\infty$, the sup taken over all compactly supported continuous sections ${\varphi}'$ such that $|{\varphi}'|\le|\varphi|$.

Let $\cK_Y\to Y$ denote the bundle of $b$--forms on $Y$. There is a pairing
$$\bigwedge^{a-b} TX|Y \times_Y \cK|Y\to \cK_Y,$$
given by contraction and restriction to $Y$. This pairing descends to a pairing of line bundles
$$\bigwedge^{a-b}N_Y\times_Y \cK|Y\to \cK_Y, \qquad (\xi,\varphi)\mapsto\iota_\xi\varphi$$
involving the normal bundle $N_Y=(TX|Y)/TY$ of $Y$. It is easy to see that if $Y$ is also oriented, $r$ determines a continuous section $\xi$ of $\bigwedge^{a-b}N_Y$ such that $\|\varphi\|_1$ can be computed as

$$\|\varphi\|_1=\int_Y|\iota_\xi\varphi|=\int_Y\iota_\xi|\varphi|.$$
For example, if in local coordinates $r=x^2_{b+1}+\cdots+x^2_a$, and $X,Y$ are oriented by $dx_1\wedge\dots\wedge dx_a$, respectively $dx_1\wedge\dots\wedge dx_b$, then $\xi$ will be (the class in $\bigwedge^{a-b}N_Y$ of) $(-1)^{a(a-b)}\sigma \partial / \partial x_{b+1}\wedge\dots\wedge\partial /\partial x_a$. We will not use this formula, but we will need an alternative representation of the limit in (4.1) in terms of a cut off milder than the sharp cut off $r<\varepsilon^2$. For this purpose we fix a continuous $\chi:\bR\to\bR$,
\begin{equation}
\chi(t)\begin{cases}
\quad =0,&\text{ for} \quad t\le 0\\
\quad >0,&\text{ for}\quad t>0,\end{cases}\qquad \liminf_{t\to\infty}\frac{\chi(t)}t>0.
\end{equation}

\begin{prop}
If $\psi$ is a compactly supported continuous section of $\cK$ and $\varphi$ its restriction to $Y$ (so a section of $\cK|Y$), then
\begin{equation}
\|\varphi\|_1=\lim_{t,T\to\infty} e^{(a-b)t/2} \int_X e^{-T\chi(t+\log r)}|\psi|.
\end{equation}
\end{prop}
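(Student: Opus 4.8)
The plan is to reduce (4.3) to an explicit computation in the directions transverse to $Y$, by localizing and then rescaling. Both sides are additive under a decomposition $\psi=\sum_j\rho_j\psi$ coming from a partition of unity $\sum_j\rho_j\equiv1$ with $\rho_j\ge0$: on the right $|\rho_j\psi|=\rho_j|\psi|$, so the integrands add, while on the left the representation $\|\varphi\|_1=\int_Y\iota_\xi|\varphi|$ gives $\sum_j\|\rho_j\varphi\|_1=\int_Y\iota_\xi|\varphi|=\|\varphi\|_1$. I would take the $\rho_j$ supported either in coordinate patches where $r=x_{b+1}^2+\cdots+x_a^2$ or in $X\setminus Y$. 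If $\text{supp }\rho_j\cap Y=\emptyset$ the left side vanishes, since $\rho_j\varphi=0$; on the right $r$ is bounded below on $\text{supp }\rho_j$, so $t+\log r\to+\infty$ and the growth hypothesis on $\chi$ forces $e^{-\chi(t+\log r)}$ to decay fast enough to overcome the factor $e^{(a-b)t/2}$, so this term drops out in the limit. It remains to treat a $\psi$ supported in one adapted chart.

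There I write $x=(x',x'')\in\bR^b\times\bR^{a-b}$ and $\psi=g\,dx_1\wedge\cdots\wedge dx_a$, so $|\psi|=|g|\,dx$ and $\varphi$ is recorded by $g(x',0)$. The key device is the substitution $x''=e^{-t/2}y$: it turns the argument of $\chi$ into $t+\log|x''|^2=\log|y|^2$, independent of $t$, and produces a Jacobian $e^{-(a-b)t/2}$ that cancels the prefactor exactly. The right-hand side of (4.3) then becomes
$$\int_{\bR^b}\!\int_{\bR^{a-b}} e^{-\chi(\log|y|^2)}\,\bigl|g(x',e^{-t/2}y)\bigr|\,dy\,dx',$$
in which $t$ appears only through the argument of $g$. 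As $g$ is continuous and compactly supported, $g(x',e^{-t/2}y)\to g(x',0)$, and the limit factors as
$$\Bigl(\int_{\bR^{a-b}}e^{-\chi(\log|y|^2)}\,dy\Bigr)\int_{\bR^b}|g(x',0)|\,dx'.$$
The second factor is $\|\varphi\|_1/\sigma$ by the local formula recorded after (4.1), so the identity reduces to the normalization $\int_{\bR^{a-b}}e^{-\chi(\log|y|^2)}\,dy=\sigma$, the volume of the unit ball in $\bR^{a-b}$.

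Two points will need care. The first is the passage to the limit: after rescaling, the $y$-region carrying $g(x',e^{-t/2}y)$ swells like $\{|y|\lesssim e^{t/2}\}$, so moving $\lim_{t\to\infty}$ inside the integral requires a $t$-uniform integrable majorant. The natural choice, $\|g\|_\infty\,e^{-\chi(\log|y|^2)}$, is integrable over all of $\bR^{a-b}$ precisely because the growth imposed on $\chi$ makes $e^{-\chi(\log|y|^2)}$ decay like a sufficiently negative power of $|y|$---the same growth used above to discard the $X\setminus Y$ pieces. The second point is the normalization itself: it is a one-dimensional integral in polar coordinates whose value is governed by $\chi$, and verifying that it returns exactly the ball volume $\sigma$ prescribed by (4.1) is the step I would scrutinize most, since this is where the precise hypotheses on $\chi$ must be brought to bear.
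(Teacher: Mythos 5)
Your reduction is carried out correctly up to the last step, but the normalization you defer---$\int_{\bR^{a-b}}e^{-\chi(\log|y|^2)}\,dy=\sigma$---is precisely where the argument fails, and it fails for every $\chi$ admitted by (4.2). Since $\chi$ vanishes on $(-\infty,0]$, the integrand is identically $1$ on the closed unit ball, which already contributes exactly $\sigma$; outside the ball the integrand is strictly positive, so the integral strictly exceeds $\sigma$. Worse, it is finite only when $\liminf_{t\to\infty}\chi(t)/t>(a-b)/2$, which (4.2) does not guarantee; the same deficiency undercuts your integrable majorant and your disposal of the patches with $\mathrm{supp}\,\rho_j\cap Y=\emptyset$, where $e^{(a-b)t/2}e^{-\chi(t+\log r)}\lesssim e^{((a-b)/2-c)t}$ tends to $0$ only if the liminf constant $c$ exceeds $(a-b)/2$. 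What your computation actually proves (when the integral converges) is $\lim_{t\to\infty}e^{(a-b)t/2}\int_Xe^{-\chi(t+\log r)}|\psi|=\sigma^{-1}\bigl(\int_{\bR^{a-b}}e^{-\chi(\log|y|^2)}\,dy\bigr)\,\|\varphi\|_1$, with a factor strictly greater than $1$ whenever $\varphi\neq0$.

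This is worth confronting with the paper's own proof rather than leaving as a step ``to scrutinize.'' The paper splits $X$ into $X(t)=\{r<e^{-t}\}$, where $\chi(t+\log r)=0$ and (4.1) gives $\|\varphi\|_1$ on the nose, and then claims the contribution of $X\setminus X(t)$ vanishes after the substitution $y=e^{t/2}x$. But that substitution turns $t+\log(x_{b+1}^2+\cdots+x_a^2)$ into $\log(y_{b+1}^2+\cdots+y_a^2)$, not into $t+t\log(y_{b+1}^2+\cdots+y_a^2)$ as displayed there, and the resulting integral $\int_{|y''|\ge1}e^{-\chi(\log|y''|^2)}\,dy''$ is a $t$-independent positive quantity---exactly the excess over $\sigma$ in your normalization integral. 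So finishing your computation honestly does not prove the proposition; it exhibits the limit with an extra constant $c_\chi=\sigma^{-1}\int_{\bR^{a-b}}e^{-\chi(\log|y|^2)}\,dy>1$ (or $+\infty$ if the liminf in (4.2) is too small). A correct statement must either retain this constant (and later push it to $1$ by letting $\chi$ increase toward the indicator-type cutoff) or strengthen the hypotheses on $\chi$; in no case is the normalization the routine verification your write-up hopes for, and you should flag the conflict rather than assume the identity.
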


\begin{proof}
Writing $X(t)=\{x\in X: r(x)<e^{-t}\}$,
$$e^{(a-b)t/2}\int_{X(t)} e^{-T\chi(t+\log r)} |\psi|=e^{(a-b)t/2} \int_{X(t)} |\psi|\to \|\varphi\|_1$$
as $t\to \infty$ by (4.1). To estimate the contribution of $X\setminus X(t)$ to the integral in (4.3) we can assume, as before, that $\psi$ is supported in a coordinate chart and in this chart $r=x^2_{b+1}+\cdots+x^2_a$. With a suitable $M\in \bR$ and by a change of coordinates $y=e^{t/2}x$
\begin{eqnarray*}
e^{(a-b)t\over 2}\negthickspace\negthickspace\int\limits_{X\setminus X(t)} \negthickspace e^{-T\chi(t+\log r)}|\psi| &\le& M e^{(a-b)t\over 2} \mspace{-18.0mu}\negthickspace\int\limits_{x^2_{b+1}+\cdots+x^2_a\ge e^{-t}} \mspace{-18.0mu}e^{-T\chi(t+\log(x^2_{b+1}+\dots+x^2_a))} dx_{b+1}\cdots dx_a\\
&\le& M\negthickspace\int\limits_{y^2_{b+1}+\cdots+y^2_a\ge 1}\mspace{-18.0mu} e^{-T\chi(\log(y^2_{b+1}+\cdots+y^2_a))} dy_{b+1} \cdots dy_a \to 0
\end{eqnarray*}
as $T\to\infty$ by dominated convergence.
 \end{proof}

Now let $X\supset Y$ be complex manifolds and $K_X$ the canonical  bundle of $X$. A function $r\in C^3(X)$ as above defines an $L^2$ norm for continuous sections $\varphi$ of $K_X|Y$,
\begin{equation}
\|\varphi\|^2_Y=\|i^{n^2}\varphi\wedge\overline\varphi\|_1\le \infty,\qquad n=\dim_\bC Y.
\end{equation}

\begin{thm}
Let $X$ be a Stein manifold and $Y\subset X$ a complex submanifold.
Suppose $r:X\to[0,1]$ is of class $C^3$, vanishes precisely on $Y$, and its critical points on $Y$ are nondegenenate in directions transverse to $Y$. If $\log r$ is plurisubharmonic, any holomorphic section $f$ of $K_X|Y$ can be extended to a holomorphic section $g$ of $K_X$ satisfying
\begin{equation}
\Big|\int_X g\wedge\overline g\Big| \le \|f\|^2_Y.
\end{equation}
\end{thm}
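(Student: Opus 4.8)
The plan is to prove this Ohsawa–Takegoshi estimate by extrapolation, that is, by applying Theorem 3.2 to a cleverly chosen family of norms on the space $V$ of holomorphic sections of $K_X$ that are square-integrable in an appropriate sense. The idea is to introduce a real parameter $t \in (\sigma_0, \infty)$ (with $\sigma_0$ to be chosen, possibly $-\infty$ or $0$) and define for each $t$ a weighted $L^2$ norm $p_t$ on $K_X$-valued sections using a weight built from the cutoff function $\chi$ and $\log r$, in the spirit of Proposition 4.1. Concretely, I would set $p_t(g)^2 = |\int_X e^{-\chi(t+\log r)} \, i^{m^2} g\wedge\overline g|$, or a variant thereof, so that as $t\to\infty$ the weight $e^{-\chi(t+\log r)}$ concentrates near $Y$ and, after the normalizing factor $e^{(a-b)t/2}$ from (4.3), the limiting norm recovers the boundary norm $\|\cdot\|_Y$ from (4.4). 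The holomorphic functionals $l\in V^*$ will be the evaluation-type functionals given by pairing against the data $f$ on $Y$; the content of (3.1) will be that these dual norms stay bounded as $t\to\infty$, which is exactly what lets Theorem 3.2 apply.

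\textbf{The key curvature input.} The crux is verifying the hypothesis $K^{**}\ge 0$ of Theorem 3.2 — equivalently, that the dual norms $p_t^*$ have seminegative curvature, so that $\log p_s^*(l)$ is subharmonic. This is where the plurisubharmonicity of $\log r$ enters decisively. I expect the weight $\chi(t+\log r)$ to be plurisubharmonic in $s$ (where $s$ is the complex parameter with $\operatorname{Re} s = t$) precisely because $\chi$ is convex and increasing and $\log r$ is plurisubharmonic: the composition of a convex increasing function with a plurisubharmonic function is plurisubharmonic, and the translation structure $t+\log r$ converts the $s$-dependence into a genuine subharmonicity statement. The positivity of the curvature of the metric $p$ on the bundle $E = S\times V$ should then follow from Berndtsson-type positivity of direct images, or more elementarily here from the fact that $e^{-\chi(t+\log r)}$-weighted $L^2$ spaces over a fixed Stein manifold form a bundle whose curvature is controlled by the complex Hessian of the weight, which is semipositive by the plurisubharmonicity just described.

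\textbf{Assembling the estimate.} Once Theorem 3.2 delivers that $p_t(v)$ is increasing in $t$ for every section $v\in V$, the extension is produced as follows. I would take the given holomorphic $f$ on $K_X|Y$, choose any holomorphic extension $g_t$ to $X$ minimizing $p_t$ subject to agreeing with $f$ on $Y$ (such a minimizer exists by Stein theory and the Hilbert-space structure at each fixed $t$), and track its norm. The monotonicity $p_t(v)\le \lim_{\tau\to\infty} p_\tau(v)$ gives, for the extremal extension, a bound by the limiting norm, which by Proposition 4.1 is $\|f\|_Y^2$. Extracting a weak limit of the $g_t$ as $t$ runs through the relevant range, and using that the weights are uniformly controlled on compact subsets away from $Y$, should yield a single holomorphic $g$ on all of $X$ that extends $f$ and satisfies $|\int_X g\wedge\overline g| \le \|f\|_Y^2$, which is (4.5).

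\textbf{The main obstacle} will be the curvature verification: showing rigorously that the chosen weighted metric $p$ on the infinite-rank bundle $E=S\times V$ has semipositive Kobayashi curvature of its second dual, $K^{**}\ge 0$. The plurisubharmonicity of $\log r$ must be leveraged through the convexity of $\chi$ at exactly the right places, and one must confirm that the boundedness condition (3.1) on the dual norms genuinely holds for a dense set of functionals $l$ — this is what guarantees the crude a priori bound that extrapolation converts into the sharp estimate. The limiting identification via Proposition 4.1, while technically delicate because of the $\varepsilon\to 0$ versus $t\to\infty$ bookkeeping and the normalizing powers of $\varepsilon$ and $e^t$, is essentially a matter of careful but routine computation once the monotonicity is in hand.
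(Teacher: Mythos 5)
Your proposal follows essentially the same route as the paper: the weighted norms $e^{(m-n)t/2}\bigl|\int e^{-\chi(t+\log r)}\,\cdot\wedge\overline{\,\cdot\,}\bigr|^{1/2}$, semipositive curvature via Berndtsson's direct image theorem applied to the plurisubharmonic weight $(n-m)\mathrm{Re}\,s+\chi(\mathrm{Re}\,s+\log r)$, verification of (3.1) for functionals concentrated on $Y$, monotonicity from Theorem 3.2, and identification of the limiting norm with $\|f\|_Y^2$ via Proposition 4.1. The only organizational differences are that the paper first exhausts $X$ by relatively compact pseudoconvex $X_\nu$ (needed to invoke Berndtsson's theorem and to pass to a final limit in $\nu$), works with the quotient norm on $W/I$ so that the minimizing extension is built in, and simply evaluates at $t=0$ where the weight is trivial rather than taking a weak limit in $t$.
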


\begin{proof}
We will include this extension problem in a family of extension problems $P_{t,T}$ to which extrapolation can be applied and which becomes trivial in the limit $t,T\to \infty$. The extension problems will be of the same nature as what the theorem is claiming to solve, with the difference that on the left of (4.5) the $L^2$ norm will be replaced by weighted $L^2$ norms, with the weights concentrating near $Y$ more and more sharply as $t,T\to \infty$. Before constructing this family, though, we exhaust $X$ by a sequence $X_\nu$ of relatively compact pseudoconvex subsets. It will then suffice to find $g_\nu\in\cO(K_{X_\nu})$ extending $f|Y\cap X_\nu$ such that
$$
\Big|\int_{X_\nu}g_\nu\wedge\overline g_\nu\Big| \le \|f\|^2_Y+\frac1\nu,
$$
because once $g_\nu$ found, a subsequence will converge locally uniformly to a $g\in\cO(K_X)$ that extends $f$ and satisfies (4.5).

We pick a convex function $\chi:\bR\to\bR$ satisfying (4.2). With $T>0$ and $\nu$ fixed, for any $s\in\bC$ we define a Hilbert norm on a subspace $W\subset \cO(K_{X_\nu})$. If $\dim_\bC X=m$ and $\dim_\bC Y=n$, let
$$
q_s(\psi)=q_{s,T}(\psi)=e^{(m-n)\text{Re }s/2}\Big|\int_{X_\nu} e^{-T\chi(\text{Re  }s+\log r)}\psi\wedge\overline\psi\Big|^{1/2} \le \infty, \qquad \psi\in\cO(K_{X_\nu});
$$
the Bergman space $W=W_\nu$ consists of $\psi\in\cO(K_{X_\nu})$ for which $q_s(\psi)<\infty$ for some (and then for all) $s\in\bC, T>0$. On $W$ all the norms $q_s$ are equivalent, and
$$
q_s(\psi)=e^{(m-n)\text{Re}\, s/2}\Big|\int_{X_\nu}\psi\wedge\overline\psi\Big|^{1/2},\quad \text{ when } \text{Re }s\le 0.
$$
The norms $q_s$ together define a metric $q$ on the bundle $F=\bC\times W\to\bC$. Since the function
$$
\bC\times X_\nu \ni(s,x)\mapsto (n-m)\text{Re}\,s+T\chi(\text{Re }s +\log r(x))
$$
is plurisubharmonic, by Berndtsson's direct image theorem the Kobayashi curvature of $q$ is semipositive. \cite[Theorem 1.1]{B} deals with pseudoconvex subdomains of $\bC^m$ instead of Stein $X_\nu$, in which case the canonical bundle is trivial; but the proof carries over to Stein manifolds. Another difference is that \cite[Theorem 1.1]{B} claims Nakano semipositivity of the direct image bundle. However, for one dimensional bases like $\bC$ Nakano semipositivity is the same as Griffiths and Kobayashi semipositivity (the latter two are equivalent for arbitrary bases as long as the metrics involved are hermitian and of class $C^2$).

Consider next the closed subspace $I\subset W$ consisting of sections $\psi$ that vanish on $Y\cap X_\nu$, and the quotient $W/I=V=V_\nu$. The trivial bundle $\bC\times V\to \bC$ is a quotient of $F$ by the subbundle $\bC\times I\to \bC$, and inherits a Hilbertian metric $p$,
 $$
 p_s(v)=p_{s,T}(v)=\inf\{q_s(\psi)\, :\, \psi\in v\},\qquad v\in V=W/I.
 $$
In particular, $p_{s,T}$ is independent of $T$ when $\text{Re}\, s\le 0$. As a quotient of a semipositively curved metric, $p$ itself will be semipositively curved, see Lemma 2.3. Thus $K^{p**}=K^p\ge 0$. To apply Theorem 3.2 we need to check one more assumption, (3.1). We take an arbitrary $\sigma_0<0$, let $S=\{s\in\bC\, : \text{Re}\, s>\sigma_0\}$ and $E=S\times V\to S$.

So suppose $l:V\to\bC$ is a linear form. Composing it with the projection $W\to V$ we obtain a form $L\in W^*$ whose kernel contains $I$. Examples of such forms come from sections of $\overline K_{Y\cap X_\nu}\otimes T^{m-n, 0}X_\nu |Y$ as follows ($\overline K_{Y\cap X_\nu}$ stands for the bundle of $(0,n)$ forms on $Y\cap X_\nu$). Given $y\in Y$, $\lambda=\alpha\otimes\xi\in\overline K_Y|_y\otimes T^{m-n,0}_y X$, and $\psi\in K_X|_y$, set
$$
\lambda\barwedge\psi=\alpha\wedge\iota_\xi \psi.
$$ 
If now $\lambda$ is a compactly supported continuous section of $\overline K_{Y\cap X_\nu}\otimes T^{m-n, 0}X_\nu|Y$, we define $L_\lambda\in W^*$ by
$$L_\lambda(\psi)=\int_{Y\cap X_\nu}\lambda\barwedge\psi, \qquad \psi\in W.$$
Clearly, $L_\lambda$ vanishes on $I$, and linear forms of this type are dense among forms in $W^*$ that vanish on $I$, because $\psi\in W$ must be in $I$ if $L_\lambda(\psi)=0$ for all $\lambda$. In checking (3.1) we can restrict ourselves to $l\in V^*$ induced by such $L_\lambda$. Since $p^*_s(l)=q^*_s(L_\lambda)$, it suffices to check
$$
\sup_{s\in S} q^*_s(L_\lambda) <\infty
$$
for every compactly supported continuous $\lambda$. Even better, we may assume that $\lambda$ is supported in a coordinate patch on $X_\nu$, where $Y$ is given in local coordinates by $x_{n+1}=\dots=x_m=0$. We take a $\psi\in W$ and apply the submean value theorem to the coefficient in $\psi\wedge\overline\psi$ on balls
$$
x_1=\text{const},\dots, x_n =\text{const},\quad |x_{n+1}|^2+\cdots+|x_m|^2 <
\varepsilon e^{-\text{Re }s},
$$
with $\varepsilon>0$ small but fixed independently of $s\in S$. Letting 
$\xi=(\partial /\partial x_{n+1},\dots,\partial /\partial x_m)$, this gives 
$$
\Big |\int_{Y\cap\,\text{supp }\lambda}\iota_\xi\psi\wedge{\overline{\iota_\xi\psi}}\Big| \le C{q_s}(\psi)^2,
$$
$C$ independent of $\psi$ and $s\in S$. Hence $|L_\lambda(\psi)|\le C'{q_s}(\psi)$ holds, and so $\sup_{s\in S}p^*_s(l)=\sup_{s\in S}q_s^*(L_\lambda)<\infty$. Therefore Theorem 3.2 applies: for any $v\in V$ and $t>\sigma_0$
\begin{equation}
p_t(v)\le\lim_{\tau\to\infty} p_\tau(v).
\end{equation}

This implies the theorem as follows. By Cartan's Theorem B we can extend $f\in\cO(K_X|Y)$ to $\psi\in\cO(K_X)$. 
For each $\nu$ let $v_\nu\in V_\nu$ be the class of $\psi|X_\nu\in W_\nu$. Choose a compactly supported continuous function $\theta:X\to[0,1]$ that is 1 on $X_\nu$. By (4.6), Proposition 4.1, and (4.4)
\begin{align*}
\lim_{T\to \infty}p_{t,T}(v_\nu)^2\le&\lim\limits_{T\to\infty} \lim\limits_{\tau\to\infty}p_{\tau,T}(v_\nu)^2\le
\lim\limits_{T\to\infty}\limsup\limits_{\tau\to\infty} q_{\tau,T}(\psi|X_\nu)^2\\
\le&\lim\limits_{T\to\infty}\limsup\limits_{\tau\to\infty} e^{(m-n)\tau}\Big|\int_X e^{-T\chi(\tau+\log r)}\theta\psi\wedge\overline{\theta\psi}\Big|\le \|f\|^2_Y.
\end{align*}
This means that for every $t$ and $\varepsilon>0$, for sufficiently large $T$ there is a $g_\nu\in v_\nu$, i.e., $g_\nu\in W_\nu$ extending $f|Y\cap X_\nu$, such that $q_{t,T}(g_\nu)\le \|f\|_Y+\varepsilon$. Setting $t=0$ this implies
$$
\Big|\int_{X_\nu} g_\nu\wedge\overline g_\nu\Big|\le \|f\|^2_Y+\varepsilon,
$$
which, as we have seen, implies the theorem.
\end{proof}

\end{document}